\newcolumntype{P}[1]{>{\centering\arraybackslash}m{#1}}
\DeclareMathOperator*{\argmin}{arg\,min} 
\providecommand{\ip}[2]{\langle #1, #2 \rangle} 
\def\la{\langle}
\def\ra{\rangle}
\def\1{{\bf 1}}
\def\g{\gamma}
\def\R{\mathbb{R}}
\def\argmin{\mathop{\rm argmin}}
\def\be{\begin{equation}}
\def\ee{\end{equation}}
\def\bit{\begin{itemize}}
\def\eit{\end{itemize}}
\newcommand{\remove}[1]{}
\providecommand{\ip}[2]{\langle #1, #2 \rangle} 
\newtheorem{assumption}{Assumption}
\newtheorem{theorem}{\bf Theorem}
\newtheorem{lemma}{\bf Lemma}
\title{\LARGE \bf
Popov Mirror-Prox for solving Variational Inequalities
}
\author{Abhishek Chakraborty$^{1}$ and Angelia Nedi\'c$^{1}$
\thanks{$^{1}$Abhishek Chakraborty and Angelia Nedi\'c are with the Department of Electrical, Computer, and Energy Engineering, Arizona State University, Tempe, USA,
        Email: {\tt\small achakr61@asu.edu}, {\tt\small Angelia.Nedich@asu.edu}. This work has been supported by the ONR award N00014-21-1-2242 and the NSF grant CIF-2134256.}}%
\begin{document}

\maketitle
\thispagestyle{empty}
\pagestyle{empty}

\begin{abstract}
We consider the mirror-prox algorithm for solving monotone Variational Inequality (VI) problems. As the mirror-prox algorithm is not practically implementable, except in special instances of VIs (such as affine VIs), we consider its implementation with Popov method updates. We provide convergence rate analysis of our proposed method for a monotone VI with a Lipschitz continuous mapping. We establish a convergence rate of $O(1/t)$, in terms of the number $t$ of iterations, for the dual gap function. Simulations on a two player matrix game corroborate our findings.
\end{abstract}

\section{Introduction}

Variational Inequalities (VIs) are important for the study of equilibrium problems arising in economics \cite{jofre2007variational}, game theory \cite{hu2011variational}, multi-agent reinforcement learning \cite{lanctot2017unified}, etc. To solve VI problems,  conventional projection-based  methods have been developed in~\cite{facchinei2003finite, korpelevich1976extragradient, popov1980modification, tseng1995linear, malitsky2015projected}. 
Korpelevich \cite{korpelevich1976extragradient} and Popov \cite{popov1980modification} methods have been mostly used to solve VI problems with monotone mappings, as the projection method can diverge for such mappings~\cite{mokhtari2020unified}. 

The mirror descent algorithm was introduced in~\cite{nemirovskij1983problem} in the context of optimization problems, 
and has dimension (of the variable) independent rates for non-smooth functions \cite{nesterov2005smooth, nemirovski2004prox}.
It has been also shown to have a favorable scaling with the gradient size~\cite{bubeck2015convex} in comparison with other projection-based first-order methods.
The mirror-prox algorithm in the context of monotone VIs has been proposed in~\cite{nemirovski2004prox} for the deterministic VIs and in~\cite{juditsky2011solving} for the stochastic VIs. However, a direct implementation of the mirror-prox method is rarely possible. 
To address this issue, the  work in~\cite{nemirovski2004prox} has proposed the use of Korpelevich method (aka extra-gradient method) to develop an implementable version of the mirror-prox method. The Korpelevich mirror-prox algorithm requires two mapping evaluations per iteration. Instead, a single mapping evaluation per iteration can be used if the Popov method is employed in mirror-prox as proposed in~\cite{semenov2017version} for pseudo-monotone VIs. 
However, the convergence rate of the algorithm has remained an open question. On the other hand, the work in~\cite{jiang2022generalized} presents the optimistic/Popov methods for saddle point problems and provide convergence rates. The work in~\cite{azizian2023rate} studies similar algorithms but with strongly monotone mapping. None of the aforementioned work provides convergence rate of the Popov mirror-prox method for monotone VIs, which we address in this paper. 

\noindent \textbf{Motivations and Contributions:} Algorithms using the mirror mapping as KL divergence has become popular in the context of machine learning with probabilistic loss \cite{yang2019learning, chakraborty2022sparse}, multi-agent reinforcement learning \cite{zhong2024heterogeneous}, \cite{lan2023policy}, etc. 
The mirror-prox map preserves the positivity of the iterates (such as the use of the KL divergence), which allows for easy projections on the set using scaling instead of the costly iterative simplex projection algorithms \cite{nemirovski2004prox, blondel2014large}. 
Moreover, the mirror-prox maps data from the actual space to a dual space and, hence, the problem geometry can be effectively used with a suitable selection of mapping functions~\cite{bubeck2015convex}. In addition, recent paper~\cite{anagnostides2023optimistic} representing Markov Games as VIs provides a potential application of the mirror-prox algorithms for efficient computations. 
Motivated by these applications, {\it we investigate the Popov mirror-prox algorithm in the context of monotone VIs defined in a general Euclidean space not necessarily finite dimensional. We  provide its convergence rate which has not been done in the existing literature even for VIs in a finite dimensional Euclidean space}. 
We obtain $O(1/t)$ convergence rate, with the number $t$ of iterations, for the dual gap function. The rate is of  the similar order as that of the Korpelevich mirror-prox~\cite{nemirovski2004prox}.

\noindent \textbf{Flow of the paper:} Section \ref{sec:intro} introduces the VI problem, related assumptions and the mirror-prox algorithm. Section~\ref{sec_algo} provides the Popov mirror-prox algorithm, while its convergence analysis is given in Section~\ref{sec_popov_rates}. Section~\ref{sec_simulation} provides simulation results of the proposed method for a two player matrix game. 
Finally, some discussion and future directions are given in Section~\ref{sec_conclusion}.

\noindent \textbf{Notations:} Throughout the paper, we consider the Euclidean space $E$ equipped with an inner product $\la\cdot,\cdot\ra$ and a norm $\|\cdot\|$, which need not necessarily be the one induced by the
inner product. We let $E^*$ be the dual space of $E$ equipped with the dual (conjugate) norm $\|\cdot\|_*$ defined by 
\[\|y\|_*=\max_{\|x\|\le 1}\la y,x\ra.\] 


\section{Problem, Assumptions and Preliminaries}\label{sec:intro}
In this section, we formulate the VI problem of our interest,
present the conceptual mirror-prox method as proposed in~\cite{nemirovski2004prox}, and introduce some definitions related to the mirror-prox algorithm.

\subsection{VI Problem and Assumptions} 
Given a set $X \subseteq E$ and a mapping $F: X \rightarrow E^*$, we consider the problem of finding $x^* \in X$ such that
\begin{align}
    \la F(x^*), x-x^* \ra \geq 0 , \qquad \forall x \in X. \label{VI_problem}
\end{align} 
The preceding problem is a Variational Inequality problem denoted by VI$(X, F)$. We take the following assumptions on the set $X$ and the mapping $F(\cdot)$ for the VI$(X, F)$.
\begin{assumption}\label{asum_compact}
    The set $X$ is convex and compact.
\end{assumption}
%
%
\begin{assumption}\label{asum_monotone}
    The mapping $F : X \rightarrow E^*$ is monotone over the set $X$, i.e.,
    \begin{align}
        \la F(x) - F(y), x-y \ra \geq 0 , \qquad \forall x,y \in X .
    \end{align}
\end{assumption}
\begin{assumption}\label{asum_lipschitz}
    The mapping $F : X \rightarrow E^*$ is Lipschitz continuous over the set $X$ with a constant $L>0$, i.e.,
    \begin{align}
        \| F(x) - F(y) \|_* \leq L \|x-y\| , \qquad \forall x,y \in X .
    \end{align}
\end{assumption}
A {\it strong solution} to VI$(X,F)$ is a point $x^*\in X$ such that inequality~\eqref{VI_problem} holds. Another concept of a solution to the VI$(X,F)$ exists, namely a weak solution. 
A {\it weak solution} to VI$(X,F)$ is a point $x^*\in X$ such that 
\begin{align}
    \la F(x), x-x^* \ra \geq 0 , \qquad \forall x \in X. \label{weak-sol}
\end{align} 
When the set $X$ is convex and compact, and the mapping $F(\cdot)$ is monotone on $X$ (cf.\ Assumptions~\ref{asum_compact}--\ref{asum_monotone}) a weak solution always exists~\cite{JuditskyNemirovsky2016}.
Moreover, when the mapping $F(\cdot)$ is
monotone and continuous on $X$, a weak solution is also
a strong solution~\cite{JuditskyNemirovsky2016}.
Under Assumption~\ref{asum_lipschitz}, the mapping $F(\cdot)$ is continuous. Hence, under Assumptions~\ref{asum_compact}--\ref{asum_lipschitz}, the weak and the strong solutions of the VI$(X,F)$ exist and coincide.

An alternative way to characterize a weak solution $x^*$ for the VI problem~\ref{VI_problem} is through the use of a dual gap function, denoted by $G(x)$  and defined by
\begin{align}
    G(x) = \max_{u \in X} \ip{F(u)}{x - u}, \qquad\forall x \in X.\label{dg_func}
\end{align}
Note that $G(x) \geq 0$ for any $x \in X$, and $G(x^*)=0$ if and only if $x^*\in X$ is a weak solution to the VI$(X,F)$~\cite{JuditskyNemirovsky2016} (see~\cite{zhang2003dual} for a finite dimensional Euclidean space). Therefore, under our
Assumptions~\ref{asum_compact}--\ref{asum_lipschitz}, we have that $G(x^*)=0$ if and only if $x^*\in X$ is a strong solution to the VI$(X,F)$. Thus, the quantity $G(x)$ can be viewed as a measure of the quality of an approximate solution $x$ to the VI$(X,F)$.



\subsection{Preliminaries and Mirror-prox Algorithm}
The mirror-prox algorithm uses a distance function induced by a differentiable and strongly convex function $\psi: X \rightarrow \R$, 
satisfying the following assumption.
\begin{assumption}\label{asum_phi}
    The function $\psi: X \rightarrow \R$ is continuously differentiable and strongly convex with the constant $\alpha>0$, i.e., for all $x,y \in X$,
    \begin{align}
        \psi(y) \geq \psi(x) + \la \nabla \psi(x), y-x \ra + \frac{\alpha}{2} \| y - x \|^2 . \nonumber
    \end{align}
\end{assumption}

The Fenchel dual of the function $\psi(\cdot)$, 
restricted to the space $E^*$, is given by
\begin{align}
    \Omega(\xi) = \max_{x \in X} [\ip{\xi}{x} - \psi(x)] , \qquad \forall \xi \in E^*. \label{Frenchel_dual_def}
\end{align}
Furthermore, for any $u \in X$, we define the functions $\Omega_u: E^* \rightarrow \R$ and $H_u: X \rightarrow \R$, as follows:
\begin{align}
    &\Omega_u(\xi) := \Omega(\xi) - \ip{\xi}{u} , \qquad \forall \xi \in E^*, \label{def_Omega_u}\\
    &H_u(x) := \Omega_u(\nabla \psi(x)) , \qquad \forall x \in X. \label{def_H_u}
\end{align}
These functions will be important in the subsequent analysis of the Popov mirror-prox method. The function $H_u(x)$ captures a form of distance between the points $u \in X$ and $x \in X$, and it is an artifact of the analysis. Moreover, we will use the prox-mapping defined as follows:
For every $x \in X$ and $\xi \in E^*$, the prox-mapping $P_x: E^* \rightarrow X$  is given by
\begin{align}
    P_x(\xi) = \argmin_{y \in X} [\psi(y) + \ip{y}{\xi - \nabla \psi(x)}] . \label{prox_map}
\end{align}
 We next present a lemma from \cite{nemirovski2004prox} that shows the Lipschitz continuity of the prox-mapping and a relation for the difference between $H_u(\cdot)$ evaluated at two different points.
\begin{lemma}[Lemma 2.1 in \cite{nemirovski2004prox}] \label{lem_prelim}
    Under Assumptions \ref{asum_compact} and \ref{asum_phi}, for all $x,u \in X$ and $\xi , \eta \in E^*$, the following relations hold:
    \begin{align}
        &\|P_x(\xi) - P_x(\eta)\| \leq \frac{1}{\alpha} \|\xi - \eta \|_*, \nonumber\\
        &H_u(P_x(\xi)) - H_u(x) \leq \ip{\xi}{u-P_x(\xi)} \nonumber\\
        & \hspace{2cm}+ [\psi(x) + \ip{\nabla \psi(x)}{P_x(\xi) - x} - \psi(P_x(\xi))], \nonumber
    \end{align}
    where $\alpha>0$ is the strong convexity constant of the function $\psi(\cdot)$.
\end{lemma}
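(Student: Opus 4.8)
The plan is to derive both claims from the first-order optimality characterization of the prox-mapping. Since $\psi$ is $\alpha$-strongly convex and $X$ is convex, for every $x\in X$ and $\xi\in E^*$ the point $p_\xi := P_x(\xi)$ is the unique minimizer over $X$ of the strongly convex function $y\mapsto \psi(y)+\ip{y}{\xi-\nabla\psi(x)}$, hence
\begin{align}
 \ip{\nabla\psi(p_\xi) + \xi - \nabla\psi(x)}{\,y - p_\xi\,} \ge 0, \qquad \forall\, y\in X. \nonumber
\end{align}
This single inequality is the engine for both parts, together with the fact that $\nabla\psi$ is $\alpha$-strongly monotone (a restatement of Assumption~\ref{asum_phi}).

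For the first inequality, I would apply the optimality condition above for $p_\xi=P_x(\xi)$ with test point $y=p_\eta=P_x(\eta)$ and, symmetrically, for $p_\eta$ with test point $y=p_\xi$, add the two, and cancel the common $\nabla\psi(x)$ terms. This gives $\ip{\nabla\psi(p_\xi)-\nabla\psi(p_\eta)}{p_\xi-p_\eta}\le \ip{\xi-\eta}{\,p_\eta-p_\xi\,}$. Lower-bounding the left side by $\alpha\|p_\xi-p_\eta\|^2$ via strong convexity and upper-bounding the right side by $\|\xi-\eta\|_*\|p_\xi-p_\eta\|$ via the definition of the conjugate norm, then dividing by $\|p_\xi-p_\eta\|$, yields $\|P_x(\xi)-P_x(\eta)\|\le \frac{1}{\alpha}\|\xi-\eta\|_*$.

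For the second inequality I would first record the Fenchel-type identity $\Omega(\nabla\psi(z)) = \ip{\nabla\psi(z)}{z} - \psi(z)$ for every $z\in X$: the concave map $y\mapsto \ip{\nabla\psi(z)}{y}-\psi(y)$ has gradient vanishing at $y=z$, so $z$ attains the maximum defining $\Omega(\nabla\psi(z))$. Plugging this into the definition of $H_u$ collapses it to the shifted Bregman-type expression $H_u(z) = \ip{\nabla\psi(z)}{z-u} - \psi(z)$. Writing out $H_u(P_x(\xi)) - H_u(x)$ with this expression and transferring the $\psi$ and $\nabla\psi(x)$ terms that sit on the right-hand side of the claim to the left, the target inequality reduces, after routine regrouping of the inner products, to $\ip{\nabla\psi(P_x(\xi)) - \nabla\psi(x) + \xi}{\,P_x(\xi) - u\,}\le 0$, which is precisely the optimality condition with test point $y=u$. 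I expect the only delicate point to be the identity for $\Omega(\nabla\psi(z))$ and the accompanying bookkeeping that recognizes $H_u$ as a Bregman-type distance; compactness of $X$ (Assumption~\ref{asum_compact}) is what guarantees the maxima defining $\Omega$ and $P_x$ are attained, and strong convexity (Assumption~\ref{asum_phi}) supplies both the uniqueness of $p_\xi$ and the quadratic lower bound used in the first part.
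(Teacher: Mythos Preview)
The paper does not supply its own proof of this lemma; it is imported verbatim as Lemma~2.1 of Nemirovski~\cite{nemirovski2004prox}. Your proposal is correct and is essentially the standard argument for that result: the optimality condition for the prox-mapping, combined once with itself (swapped roles) and the $\alpha$-strong monotonicity of $\nabla\psi$ to get the Lipschitz bound, and combined once with the Fenchel-type identity $\Omega(\nabla\psi(z))=\ip{\nabla\psi(z)}{z}-\psi(z)$ to collapse $H_u(z)$ and obtain the second inequality. Incidentally, the paper itself re-derives exactly your identity $H_u(z)=\ip{\nabla\psi(z)}{z-u}-\psi(z)$ inside the proof of Theorem~\ref{thm_popov_rates} (see~\eqref{eq_hu_x1}), by the same unconstrained-optimum reasoning you sketch, so your treatment of that point is fully aligned with the paper's conventions.
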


The first relation of Lemma~\ref{lem_prelim} just states that the prox-mapping $P_x$, defined in \eqref{prox_map}, is Lipschitz continuous with the constant $1/\alpha$. The second relation of Lemma~\ref{lem_prelim} is a key result that is used in the analysis of the mirror-prox algorithm in~\cite{nemirovski2004prox}, which will also be crucial in our analysis later on.

We next provide the conceptual mirror-prox method  of~\cite{nemirovski2004prox}.
%
%
\begin{algorithm}
		\caption {Conceptual mirror-prox \cite{nemirovski2004prox}}
		\label{algo_mirror_prox}
		\begin{algorithmic}[1]
                \STATE \textbf{Initialize:} $x_0 \in X$
			\FOR{$t=1,2,\ldots$}
                \IF{$P_{x_{t-1}}(F(x_{t-1})) = x_{t-1}$}
                \RETURN $x_{t-1}$ as the solution
                \ELSE
                \STATE \textbf{Choose} $\gamma_t > 0$ and $y_t \in X$ to satisfy the relation 
                \begin{align}
                    &\ip{y_t - P_{x_{t-1}}(\gamma_t F(y_t))}{\gamma_t F(y_t)} \nonumber\\
                    &\hspace{-2mm} + [\psi(x_{t-1}) + \ip{\nabla \psi(x_{t-1})}{P_{x_{t-1}}(\gamma_t F(y_t))-x_{t-1}} \nonumber\\
                    &\hspace{1cm}- \psi(P_{x_{t-1}}(\gamma_t F(y_t)))] \leq 0 \label{mirror_prox_ineq}
                \end{align}
                \vspace{-4mm}
                \STATE \textbf{Update:  } $x_t = P_{x_{t-1}}(\gamma_t F(y_t))$             
                \ENDIF
                \STATE \textbf{Output: } $y^{(t)} = \frac{\sum_{\tau=1}^t \gamma_{\tau} y_{\tau}}{\sum_{\tau=1}^t \gamma_{\tau}}$
			\ENDFOR
		\end{algorithmic}
\end{algorithm}	
%

%

The output of Algorithm \ref{algo_mirror_prox} is the weighted combination of the iterates $y_t$ as given in step 9 of the algorithm. 
Note that step 6 of Algorithm~\ref{algo_mirror_prox} does not 
specify an update equation for obtaining $y_t \in X$ satisfying \ref{mirror_prox_ineq}.
Thus, the mirror-prox method
can be computationally prohibitive to implement. 
Setting $y_t = P_{x_{t-1}}(\gamma_t F(y_t))$ and using the convexity of $\psi(\cdot)$ ensures that \eqref{mirror_prox_ineq} holds. However, computing $y_t$ in such a way is costly, as we need to solve a VI with a monotone map $F_{t-1}(y_t) = \nabla \psi(y_t) - \nabla \psi(x_{t-1}) + \gamma_t F(y_t)$ (see \cite{nemirovski2004prox}). Hence $x_t = P_{x_{t-1}}(\gamma_t F(y_t))$ is impossible to implement.

Note that the definition of $x_t$ in step 7 of Algorithm~\ref{algo_mirror_prox} and the definition of the prox-mapping $P_{x_{t-1}}(\cdot)$ in~\eqref{prox_map} imply that 
\begin{align}\label{eq-xt}
    x_t= \argmin_{y \in X} [\ip{\gamma_t F(y_t) - \nabla \psi(x_{t-1})}{y} +\psi(y)].
\end{align}
Thus, determining the point $x_t$ in a closed form is not always easy, unless the set $X$ has a favorable structure so that it is easy to project on, such as, for example box constraints, norm ball, etc. In this paper, we {\it assume that the set $X$ has a simple structure for the projection}, and we focus only on simplifying the update for $y_t$ so that inequality~\eqref{mirror_prox_ineq} is satisfied. 

As the step of finding $y_t$ satisfying~\eqref{mirror_prox_ineq} is impossible to implement, multiple update steps to reach $x_t$  were proposed in \cite{nemirovski2004prox}. The main insight for these updates comes from the fact that, under Assumption~\ref{asum_lipschitz}, the prox-mapping $P_x(\cdot)$ is contractive for suitably chosen points with a constant step size $\gamma_t=\gamma$. In particular, by using Lemma~\ref{lem_prelim} with $\xi = \gamma F(y_1)$ and $\eta = \gamma F(y_2)$, for some $y_1,y_2 \in X$, 
we have that 
\begin{align}
    \|P_x(\gamma F(y_1)) - P_x(\gamma F(y_2))\| &\leq \frac{1}{\alpha} \|\gamma F(y_1) - \gamma F(y_2) \|_* \nonumber\\
    &\leq \frac{\gamma L}{\alpha} \|y_1 - y_2 \|,  \label{eq_contraction}
\end{align}
where $L$ is the Lipschitz constant of the mapping $F(\cdot)$ from~Assumption~\ref{asum_lipschitz}. With a choice of the step size $\gamma$, such that $\gamma L/\alpha <1$, from~\eqref{eq_contraction} we see that
the prox-mapping is contractive (at points of the form $\gamma F(y)$) and, thus, the solution $x_t$ in~\eqref{eq-xt} can be obtained geometrically fast. It turned out that $x_t$ 
can be obtained with only two update steps~\cite{nemirovski2004prox}. We consider such updates in the next section for the mirror-prox method combined with the Popov method.



\section{Popov Mirror-prox Method}\label{sec_algo}


In this section, we present the Popov mirror-prox method. We start with the two step update of \cite{nemirovski2004prox} with a starting iterate $x \in X$, and two points $\xi, \eta \in E^*$:
\begin{align}
    &\widetilde y = \argmin_{z \in X} [\ip{\gamma \xi - \nabla \psi(x)}{z} + \psi(z)] , \nonumber\\
    &\widetilde x = \argmin_{z \in X} [\ip{\gamma \eta - \nabla \psi(x)}{z} + \psi(z)] , \label{two_step_update}
\end{align}
where $\g>0$ is a step size.
Based on the selection of the constant step size $\gamma>0$ and the points $\xi$ and $\eta$, the relation in \eqref{two_step_update} can be used as an implementable version of the conceptual mirror-prox algorithm, i.e., Algorithm \ref{algo_mirror_prox}. The paper \cite{nemirovski2004prox} studies the Korpelevich mirror-prox by selecting the points $x=x_t$, $\widetilde y = y_{t+1}$, $\widetilde x= x_{t+1}$, $\xi = F(x_t)$ and $\eta=F(y_{t+1})$, for iterate index $t = 1, \ldots$, and hence requires the mapping computation for two sequences, $\{x_t\}$ and $\{y_t\}$, which can be costly. Instead, the mapping computation can be done for a single sequence and an old mapping evaluation can be reused, which is the basis of our Popov mirror-prox method presented in Algorithm~\ref{algo_Popov}. 
%
%
%
%

\begin{algorithm}
		\caption {Popov mirror-prox method}
		\label{algo_Popov}
		\begin{algorithmic}[1]
			\REQUIRE{step size $\gamma>0$}
                \STATE \textbf{Initialize:} $x_0,y_0 \in X$
			\FOR{$t=1,\ldots$}
                \STATE \textbf{Update}
                \begin{align*}
                    &y_{t+1} = \argmin_{z \in X} [\ip{\gamma F(y_{t}) - \nabla \psi(x_{t})}{z} + \psi(z)] \\
                    &x_{t+1} = \argmin_{z \in X} [\ip{\gamma F(y_{t+1}) - \nabla \psi(x_{t})}{z} + \psi(z)]
                \end{align*}
                \vspace{-2mm}
                \STATE \textbf{Output:  } $y^{(t+1)} = \frac{1}{t+1}
                    \sum_{\tau=0}^t y_{\tau+1}$
			\ENDFOR
		\end{algorithmic}
\end{algorithm}	
%
%

In Algorithm \ref{algo_Popov}, the updates for $y_{t+1}$ and $x_{t+1}$ are obtained via relations in~\eqref{two_step_update},
by letting $x=x_t$, $\widetilde y = y_{t+1}$, $\widetilde x= x_{t+1}$, $\xi = F(y_t)$, $\eta=F(y_{t+1})$, and using a fixed step size $\gamma>0$. Since both $y_{t+1}$ and $x_{t+1}$ at step 3 of Algorithm 2 are solutions to strongly convex problems, they are uniquely defined. 
Moreover,
by using the optimality conditions, we can see that the iterates $y_{t+1}$ and $x_{t+1}$ are such that for any $z \in X$, the following inequalities hold:
\begin{align}
    &\ip{\gamma F(y_t) - \nabla \psi(x_t) + \nabla \psi(y_{t+1})}{z-y_{t+1}} \geq 0 , \label{opt_y_pop}\\
    &\ip{\gamma F(y_{t+1}) - \nabla \psi(x_t) + \nabla \psi(x_{t+1})}{z-x_{t+1}} \geq 0 \label{opt_x_pop}.
\end{align}
The first step of the update produces an iterate sequence $\{y_{t}\}$, while the next step produces another iterate sequence $\{x_{t}\}$. The mapping $F(\cdot)$ is evaluated with respect to the sequence $\{y_t\}$ and the old mapping is reused resulting in a single mapping computation per loop of the algorithm. The output $y^{(t+1)}$ of Algorithm 2 is  the average of the iterates $y_k$, $1\le k\le t+1$.



\section{Convergence Rate analysis}\label{sec_popov_rates}
Before analyzing the convergence rates of Algorithm \ref{algo_Popov}, we first show a generic lemma that establishes the relationship between the different points used in describing the two step updates presented in \eqref{two_step_update}.
\begin{lemma}[Lemma 3.1 in \cite{nemirovski2004prox}]\label{lem_two_step}
    Under Assumptions \ref{asum_compact} and \ref{asum_phi}, for the updates $\tilde y$ and $\tilde x$ in \eqref{two_step_update}, the following relations hold:

    \noindent (i) The difference between the two iterates $\widetilde y \in X$ and $\widetilde x \in X$ satisfies
    \begin{align}
        \|\widetilde y-\widetilde x\| \leq \frac{\gamma}{\alpha} \|\xi-\eta\|_*, 
    \end{align}
    where $\alpha>0$ is the strong convexity constant of the function $\psi(\cdot)$.\\
    \noindent (ii) The inner product $\ip{\gamma\eta}{\widetilde y-u}$ can be upper bounded as follows 
    \begin{align}
        \ip{\gamma\eta}{\widetilde y-u} \leq H_u(x) - H_u(\widetilde x) + \delta , \label{eq_dual_gap}
    \end{align}
    where $\delta = \ip{\gamma\eta}{\widetilde y-\widetilde x} + [\psi(x)+\ip{\nabla \psi(x)}{\widetilde x -x} - \psi(\widetilde x)]$.

    \noindent (iii) Moreover, the term $\delta$ in~\eqref{eq_dual_gap} can be upper bounded by $\epsilon$ where
    \begin{align}
        \epsilon &= \ip{\gamma(\eta-\xi)}{\widetilde y-\widetilde x} + [\psi(x)+\ip{\nabla \psi(x)}{\widetilde y -x}\nonumber\\
        &\hspace{2cm}+\ip{\nabla \psi(\widetilde y)}{\widetilde x -\widetilde y} - \psi(\widetilde x)] . \label{eq_eps}
    \end{align}

    \noindent (iv) In addition,  $\epsilon$ can be upper bounded as follows
    \begin{align}
        \epsilon \leq \alpha^{-1} \gamma^2 \|\xi-\eta\|^2_* - \frac{\alpha}{2} [\|\widetilde y-x\|^2 + \|\widetilde y-\widetilde x\|^2] . \label{eps_bound}
    \end{align}
\end{lemma}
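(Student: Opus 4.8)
The plan is to recognize the two updates in~\eqref{two_step_update} as prox-mappings in the sense of~\eqref{prox_map}, namely $\widetilde y = P_x(\gamma\xi)$ and $\widetilde x = P_x(\gamma\eta)$, and then derive (i)--(iv) by combining Lemma~\ref{lem_prelim}, the first-order optimality condition for $\widetilde y$, and the $\alpha$-strong convexity of $\psi$ from Assumption~\ref{asum_phi}. Part (i) is then immediate: applying the first inequality of Lemma~\ref{lem_prelim} with the dual vectors $\gamma\xi$ and $\gamma\eta$ in place of $\xi,\eta$ gives $\|\widetilde y-\widetilde x\| = \|P_x(\gamma\xi)-P_x(\gamma\eta)\| \le \alpha^{-1}\|\gamma\xi-\gamma\eta\|_* = (\gamma/\alpha)\|\xi-\eta\|_*$. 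For part (ii), I would apply the second inequality of Lemma~\ref{lem_prelim} with dual vector $\gamma\eta$ (for which $P_x(\gamma\eta)=\widetilde x$) and rearrange it to $\ip{\gamma\eta}{\widetilde x-u} \le H_u(x) - H_u(\widetilde x) + [\psi(x)+\ip{\nabla\psi(x)}{\widetilde x-x}-\psi(\widetilde x)]$; writing $\ip{\gamma\eta}{\widetilde y-u} = \ip{\gamma\eta}{\widetilde x-u} + \ip{\gamma\eta}{\widetilde y-\widetilde x}$ and identifying the leftover terms as $\delta$ then yields~\eqref{eq_dual_gap}.

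For part (iii), I would form the difference $\epsilon-\delta$ directly from the definitions: the $\psi(x)$, $\psi(\widetilde x)$ and $\ip{\nabla\psi(x)}{\cdot}$ contributions cancel or combine, leaving $\epsilon-\delta = \ip{\gamma\xi - \nabla\psi(x) + \nabla\psi(\widetilde y)}{\widetilde x-\widetilde y}$. Since $\widetilde y$ minimizes the strongly convex function $z\mapsto \ip{\gamma\xi-\nabla\psi(x)}{z}+\psi(z)$ over $X$, its optimality condition reads $\ip{\gamma\xi-\nabla\psi(x)+\nabla\psi(\widetilde y)}{z-\widetilde y}\ge 0$ for every $z\in X$; taking $z=\widetilde x$ shows $\epsilon-\delta\ge 0$, i.e.\ $\delta\le\epsilon$.

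For part (iv), I would insert $\pm\psi(\widetilde y)$ into the bracketed term of $\epsilon$ so that the bracket splits into the two Bregman-type quantities $D_1 = \psi(x)+\ip{\nabla\psi(x)}{\widetilde y-x}-\psi(\widetilde y)$ and $D_2 = \psi(\widetilde y)+\ip{\nabla\psi(\widetilde y)}{\widetilde x-\widetilde y}-\psi(\widetilde x)$; by $\alpha$-strong convexity of $\psi$, $D_1\le -\tfrac{\alpha}{2}\|\widetilde y-x\|^2$ and $D_2\le -\tfrac{\alpha}{2}\|\widetilde x-\widetilde y\|^2$. The remaining term is handled by the generalized Cauchy--Schwarz inequality, $\ip{\gamma(\eta-\xi)}{\widetilde y-\widetilde x}\le \gamma\|\eta-\xi\|_*\,\|\widetilde y-\widetilde x\|$, followed by bounding $\|\widetilde y-\widetilde x\|$ through part (i), which produces $\le \alpha^{-1}\gamma^2\|\xi-\eta\|_*^2$. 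Summing the three bounds gives~\eqref{eps_bound}.

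The computations are essentially bookkeeping, so I do not expect a serious obstacle; the two spots calling for a bit of care are (iii), where one must recognize that $\epsilon-\delta$ is exactly the optimality-condition inner product for $\widetilde y$ evaluated at $z=\widetilde x$, and (iv), where one should absorb the cross term using part (i), keeping $-\tfrac{\alpha}{2}\|\widetilde y-\widetilde x\|^2$ intact, rather than via Young's inequality, which would yield a weaker constant. Tracking the signs consistently across the four parts is the main thing that could go wrong.
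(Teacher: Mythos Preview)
Your proposal is correct in all four parts. The paper itself does not supply a proof of this lemma; it simply attributes the result to Lemma~3.1 of~\cite{nemirovski2004prox} and states it for later use. Your argument---identifying $\widetilde y=P_x(\gamma\xi)$, $\widetilde x=P_x(\gamma\eta)$, invoking the two relations of Lemma~\ref{lem_prelim}, then using the optimality condition for $\widetilde y$ to prove $\delta\le\epsilon$, and finally splitting the bracket in $\epsilon$ into two negative Bregman remainders while bounding the cross term via part~(i)---is exactly the route taken in Nemirovski's original proof, so there is nothing to contrast.
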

%

%
Lemma \ref{lem_two_step} will be used to establish the convergence rate of Algorithm \ref{algo_Popov}. 
Note that the first two equations of Lemma \ref{lem_two_step} have a lot of resemblance with \eqref{eq_contraction} and the relations presented in Lemma~\ref{lem_prelim}. The relation in~\eqref{eq_dual_gap} is useful for analyzing the convergence rate of Algorithm~\ref{algo_Popov}. 

From now onwards, since all the quantities will be iterate index dependent, we will use Lemma \ref{lem_two_step} with $\delta_t$ in place of $\delta$, and $\epsilon_t$ in place of $\epsilon$, where $t$ is the iterate index. 

Next, we present a lemma that establishes an upper bound for the sum of $\epsilon_t$ for $t$ ranging from $1$ to some $T\ge 1$.

\begin{lemma}\label{lem_eps_bound}
    Let Assumptions \ref{asum_compact}, \ref{asum_lipschitz}, and \ref{asum_phi} hold. Consider Algorithm~\ref{algo_Popov} with a constant step size $\gamma$ satisfying $0< \gamma \leq \frac{\alpha}{2L}$. Then, we have for all $T\ge 1$,
    \begin{align}
        \sum_{t=0}^{T} \epsilon_t \leq \frac{2\gamma^2 L^2}{\alpha} \|y_0 - x_0\|^2 . \nonumber
    \end{align}
\end{lemma}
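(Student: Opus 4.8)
The plan is to bound each $\epsilon_t$ by applying Lemma~\ref{lem_two_step}(iv) along the iterations of Algorithm~\ref{algo_Popov}, and then to telescope. At iteration $t$, step 3 of Algorithm~\ref{algo_Popov} is exactly the two-step update~\eqref{two_step_update} with $x = x_t$, $\widetilde y = y_{t+1}$, $\widetilde x = x_{t+1}$, $\xi = F(y_t)$, and $\eta = F(y_{t+1})$. Substituting these into~\eqref{eps_bound} and using the Lipschitz bound $\|F(y_t) - F(y_{t+1})\|_* \le L\|y_t - y_{t+1}\|$ from Assumption~\ref{asum_lipschitz}, I would obtain
\[
\epsilon_t \le \frac{\gamma^2 L^2}{\alpha}\|y_t - y_{t+1}\|^2 - \frac{\alpha}{2}\|y_{t+1} - x_t\|^2 - \frac{\alpha}{2}\|y_{t+1} - x_{t+1}\|^2 .
\]

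The key step is to split the term $\|y_t - y_{t+1}\|$ through the intermediate iterate $x_t$: by the triangle inequality and $(a+b)^2 \le 2a^2 + 2b^2$,
\[
\|y_t - y_{t+1}\|^2 \le 2\|y_t - x_t\|^2 + 2\|x_t - y_{t+1}\|^2 .
\]
This particular decomposition is chosen so that the resulting cross term $\tfrac{2\gamma^2 L^2}{\alpha}\|x_t - y_{t+1}\|^2$ has the same form as the negative term $-\tfrac{\alpha}{2}\|y_{t+1} - x_t\|^2$ already available above. Since the step size satisfies $\gamma \le \alpha/(2L)$, one has $\tfrac{2\gamma^2 L^2}{\alpha} \le \tfrac{\alpha}{2}$, so these two terms combine to something nonpositive; applying the same inequality to the remaining negative term then gives
\[
\epsilon_t \le \frac{2\gamma^2 L^2}{\alpha}\|y_t - x_t\|^2 - \frac{\alpha}{2}\|y_{t+1} - x_{t+1}\|^2 \le \frac{2\gamma^2 L^2}{\alpha}\big(\|y_t - x_t\|^2 - \|y_{t+1} - x_{t+1}\|^2\big) .
\]

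Finally, I would sum over $t = 0,1,\ldots,T$; the right-hand side telescopes, and discarding the leftover $-\tfrac{2\gamma^2 L^2}{\alpha}\|y_{T+1} - x_{T+1}\|^2$ yields $\sum_{t=0}^{T}\epsilon_t \le \tfrac{2\gamma^2 L^2}{\alpha}\|y_0 - x_0\|^2$, which is the claim. I do not expect a serious obstacle: the content lies entirely in (i) recognizing the correct triangle-inequality split — through $x_t$, rather than directly — so that the cross term matches the negative term supplied by Lemma~\ref{lem_two_step}(iv), (ii) the elementary arithmetic $\tfrac{2\gamma^2 L^2}{\alpha} \le \tfrac{\alpha}{2}$ under the stated step-size bound, and (iii) keeping the indices straight at the boundary $t=0$, where the initialization $x_0,y_0$ enters, so that the telescoping sum collapses to $\|y_0 - x_0\|^2$.
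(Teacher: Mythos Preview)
Your proposal is correct and follows essentially the same route as the paper: apply Lemma~\ref{lem_two_step}(iv) at iteration $t$, split through $x_t$ to produce the terms $\|y_t-x_t\|^2$ and $\|y_{t+1}-x_t\|^2$, use $\gamma\le \alpha/(2L)$ to absorb the latter against the negative term, and telescope. The only cosmetic difference is that the paper splits $F(y_t)-F(y_{t+1})$ through $F(x_t)$ in the dual norm before applying Lipschitz, whereas you apply Lipschitz first and then split $y_t-y_{t+1}$ through $x_t$ in the primal norm; both yield the identical bound $\frac{2\gamma^2 L^2}{\alpha}(\|y_t-x_t\|^2+\|y_{t+1}-x_t\|^2)$.
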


\begin{proof}
    We use relation \eqref{eps_bound} in Lemma \ref{lem_two_step} with $x=x_t$, $\widetilde y = y_{t+1}$, $\widetilde x= x_{t+1}$, $\xi = F(y_t)$, $\eta=F(y_{t+1})$, and $\epsilon = \epsilon_t$. Thus, we obtain
    \begin{align}
        &\epsilon_t \leq \frac{\gamma^2}{\alpha} \|F(y_t) - F(y_{t+1})\|^2_* \nonumber\\
        & - \frac{\alpha}{2} [\|y_{t+1}-x_t\|^2 + \|y_{t+1}-x_{t+1}\|^2] . \label{lem3_bound1}
    \end{align}
    Next, we concentrate on the first term on the right hand side of \eqref{lem3_bound1}. We add and subtract $F(x_t)$ to that term, use the triangle inequality, and the inequality $(a+b)^2 \leq 2 (a^2 + b^2)$, to obtain
    \begin{align}
        &\|F(y_t) - F(y_{t+1})\|^2_* \nonumber\\
        &= \|(F(y_t) - F(x_t)) - (F(y_{t+1})-F(x_t))\|^2_* \nonumber\\
        &\leq 2 \|F(y_t) - F(x_t)\|^2_* + 2 \|F(y_{t+1})-F(x_t)\|^2_* . \nonumber
    \end{align}
    Using the Lipschitz continuity of the mapping $F(\cdot)$ (Assumption~\ref{asum_lipschitz}), from the preceding inequality we further obtain
    \begin{align}
        \|F(y_t) - F(y_{t+1})\|^2_* \leq &2L^2 \|y_t - x_t\|^2 \nonumber\\
        &+ 2L^2 \|y_{t+1}-x_t\|^2_* . \label{lem3_bound2}
    \end{align}
    Now, substituting the estimate in \eqref{lem3_bound2} back in relation~\eqref{lem3_bound1}, we find that
    \begin{align}
        \epsilon_t &\leq \left(\frac{2\gamma^2 L^2}{\alpha} - \frac{\alpha}{2} \right) \|y_{t+1}-x_t\|^2 - \frac{\alpha}{2} \|y_{t+1}-x_{t+1}\|^2 \nonumber\\
        & \hspace{1cm}+ \frac{2\gamma^2 L^2}{\alpha} \|y_t - x_t\|^2 . \nonumber
    \end{align}
    Next, we sum $\epsilon_t$ for $t=0,1,\ldots,T$, and obtain
    \begin{align}
        \sum_{t=0}^{T} \epsilon_t &\leq \frac{2\gamma^2 L^2}{\alpha} \|y_0 - x_0\|^2 - \frac{\alpha}{2} \|y_{T+1}-x_{T+1}\|^2 \nonumber\\
    &- \sum_{t=0}^{T-1} \left( \frac{\alpha}{2} - \frac{2\gamma^2 L^2}{\alpha} \right) \|y_{t+1} - x_{t+1}\|^2 \nonumber\\
    &- \sum_{t=0}^{T} \left( \frac{\alpha}{2} - \frac{2\gamma^2 L^2}{\alpha} \right) \|y_{t+1} - x_{t}\|^2 . \label{lem3_bound3}
    \end{align}
    The step size $\gamma$ is selected to make the third and fourth terms on the right hand side of \eqref{lem3_bound3} non-positive, which holds when  $0 < \gamma \leq \frac{\alpha}{2L}$. With this step size selection, we can drop the non-positive terms on the right hand side of \eqref{lem3_bound3} and, thus, arrive at the desired relation.
\end{proof}

We now provide the convergence rate of Algorithm \ref{algo_Popov} in terms of the dual gap function $G(\cdot)$ defined in \eqref{dg_func}. The proof uses Lemma~\ref{lem_two_step} and Lemma \ref{lem_eps_bound}.

\begin{theorem}\label{thm_popov_rates}
Let Assumptions \ref{asum_compact}--\ref{asum_phi} hold.
Consider Algorithm \ref{algo_Popov} with a constant step size $\gamma = \frac{\alpha}{2L}$. Then, for the dual gap function $G(\cdot)$, evaluated at the average $y^{(T+1)}$ of the iterates, we have for all $T\ge 1$,
    \begin{align*}
        G(y^{(T+1)}) \leq \frac{2L}{(T+1) \alpha}  \max_{u \in X} B_{\psi}(u,x_0) + \frac{L}{T+1} \|y_0 - x_0\|^2 ,
    \end{align*}
    where $y^{(T+1)} = \frac{1}{T+1} \sum_{t=0}^T y_{t+1}$ and $B_{\psi}(\cdot,\cdot)$ is the Bregman divergence induced by $\psi(\cdot)$, 
    defined by $B_{\psi}(u,x) = \psi(u) - \psi(x) - \ip{\nabla \psi(x)}{u-x}$ for all $u,x\in X$.
\end{theorem}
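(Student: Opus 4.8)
The plan is to turn the one-step estimate of Lemma~\ref{lem_two_step}(ii)--(iii) into a telescoping sum over iterations, control the accumulated error term by Lemma~\ref{lem_eps_bound}, and then pass to the dual gap function using monotonicity of $F$. Concretely, I would apply Lemma~\ref{lem_two_step}(ii) with the identifications $x=x_t$, $\widetilde y=y_{t+1}$, $\widetilde x=x_{t+1}$, $\xi=F(y_t)$, $\eta=F(y_{t+1})$, so that for every fixed $u\in X$ and every $t\ge 0$,
\[
    \ip{\gamma F(y_{t+1})}{y_{t+1}-u}\le H_u(x_t)-H_u(x_{t+1})+\delta_t\le H_u(x_t)-H_u(x_{t+1})+\epsilon_t,
\]
where the last step is Lemma~\ref{lem_two_step}(iii). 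Summing over $t=0,\dots,T$ collapses the $H_u$ contributions to $H_u(x_0)-H_u(x_{T+1})$, and Lemma~\ref{lem_eps_bound} bounds $\sum_{t=0}^T\epsilon_t$ by $\tfrac{2\gamma^2L^2}{\alpha}\|y_0-x_0\|^2$.

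The next step is to recognize $H_u$ as a shifted Bregman divergence. From $H_u(x)=\Omega(\nabla\psi(x))-\ip{\nabla\psi(x)}{u}$ and the observation that $z\mapsto\ip{\nabla\psi(x)}{z}-\psi(z)$ is maximized over $X$ at $z=x$ (its first-order optimality condition $\ip{\nabla\psi(x)-\nabla\psi(z)}{x-z}\le 0$ holds trivially at $z=x$, and the maximizer is unique by strong convexity), we get $\Omega(\nabla\psi(x))=\ip{\nabla\psi(x)}{x}-\psi(x)$, hence $H_u(x)=B_\psi(u,x)-\psi(u)$. Therefore $H_u(x_0)-H_u(x_{T+1})=B_\psi(u,x_0)-B_\psi(u,x_{T+1})\le B_\psi(u,x_0)$, since strong convexity of $\psi$ makes $B_\psi(\cdot,\cdot)\ge 0$. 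Combining this with the telescoped sum gives
\[
    \sum_{t=0}^T\ip{\gamma F(y_{t+1})}{y_{t+1}-u}\le B_\psi(u,x_0)+\frac{2\gamma^2L^2}{\alpha}\|y_0-x_0\|^2.
\]

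Finally I would invoke monotonicity (Assumption~\ref{asum_monotone}) in the form $\ip{F(y_{t+1})}{y_{t+1}-u}\ge\ip{F(u)}{y_{t+1}-u}$, divide through by $\gamma$, and use linearity of the inner product together with $y^{(T+1)}=\tfrac1{T+1}\sum_{t=0}^Ty_{t+1}$ to rewrite the left-hand side as $(T+1)\ip{F(u)}{y^{(T+1)}-u}$. Dividing by $T+1$, taking the maximum over $u\in X$ (legitimate because the preceding display holds for each fixed $u$ and the $\|y_0-x_0\|^2$ term does not depend on $u$), and substituting $\gamma=\tfrac{\alpha}{2L}$ --- so that $\tfrac1\gamma=\tfrac{2L}{\alpha}$ and $\tfrac{2\gamma L^2}{\alpha}=L$ --- produces exactly the claimed bound on $G(y^{(T+1)})$.

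Most of the argument is bookkeeping once Lemmas~\ref{lem_two_step} and~\ref{lem_eps_bound} are in hand; the step that needs genuine care is the identification $H_u(x)=B_\psi(u,x)-\psi(u)$, because $H_u$ is introduced only as an artifact of the analysis and it is this translation that converts the telescoped boundary term into a meaningful, sign-definite quantity. A secondary point to watch is that Lemma~\ref{lem_two_step}(ii) must be applied for arbitrary but \emph{fixed} $u\in X$ throughout, so that the maximization over $u$ at the very end is valid.
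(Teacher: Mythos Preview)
Your proposal is correct and follows essentially the same route as the paper: apply Lemma~\ref{lem_two_step}(ii)--(iii) with the Popov identifications, telescope $H_u(x_t)-H_u(x_{t+1})$, invoke Lemma~\ref{lem_eps_bound} for the accumulated error, use monotonicity to pass to $\ip{F(u)}{y^{(T+1)}-u}$, and substitute $\gamma=\alpha/(2L)$. Your direct identification $H_u(x)=B_\psi(u,x)-\psi(u)$ (valid because $x\in X$ forces $\Omega(\nabla\psi(x))=\ip{\nabla\psi(x)}{x}-\psi(x)$) is a slightly cleaner packaging of what the paper does by computing $H_u(x_0)$ and $H_u(x_{T+1})$ separately, but the content is identical.
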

\begin{proof}
    We use~\eqref{eq_dual_gap} of Lemma \ref{lem_two_step} with the following identifications $x=x_t$, $\widetilde y = y_{t+1}$, $\widetilde x= x_{t+1}$, $\xi = F(y_t)$, $\eta=F(y_{t+1})$, $\delta=\delta_t$, and $\epsilon = \epsilon_t$. We thus obtain
    \begin{align}
         \ip{\gamma F(y_{t+1})}{y_{t+1} - u} \leq H_u(x_t) - H_u(x_{t+1}) + \delta_t . \label{thm_eq1}
    \end{align}
    By Lemma \ref{lem_two_step}(iii), with $\delta=\delta_t$ and $\epsilon=\epsilon_t$, we see that $\delta_t \leq \epsilon_t$.
    Hence, \eqref{thm_eq1} can be further upper bounded as follows
    \begin{align}
        \ip{\gamma F(y_{t+1})}{y_{t+1} - u} \leq H_u(x_t) - H_u(x_{t+1}) + \epsilon_t . \nonumber
    \end{align}
    By summing the preceding relation for $t=0,1, \ldots, T$, for any $T\ge 1$, we obtain 
    \begin{align}
        \sum_{t=0}^T \ip{\gamma F(y_{t+1})}{y_{t+1} - u} \leq H_u(x_0) - H_u(x_{T+1}) + \sum_{t=0}^T \epsilon_t . \label{thm_eq2}
    \end{align}
    Using the monotonicity of the mapping $F(\cdot)$ (Assumption~\ref{asum_monotone}), we can lower bound the term on the left hand side of~\eqref{thm_eq2} 
    as follows:
    \begin{align*}
        &\sum_{t=0}^T \ip{\gamma F(y_{t+1})}{y_{t+1} - u} \geq \sum_{t=0}^T \ip{\gamma F(u)}{y_{t+1} - u} 
        \end{align*}
Since $\sum_{t=0}^T y_{t+1}=(T+1) y^{(T+1)}$,  it follows that 
        \begin{align}
        \sum_{t=0}^T \ip{\gamma F(y_{t+1})}{y_{t+1} - u} \ge 
         \gamma (T+1) \la F(u) , y^{(T+1)} - u \ra.
         \label{eq_lower_bd}
    \end{align}
    
    Next, we focus on the first term on the right hand side of \eqref{thm_eq2}. Using the definitions of the functions $\Omega_u(\cdot)$ and $H_u(\cdot)$ as given in   \eqref{def_Omega_u} and \eqref{def_H_u}, respectively, we have that
    \begin{align}
        H_u(x_0) &= \Omega(\nabla \psi (x_0)) - \la \nabla \psi (x_0), u \ra \nonumber\\
        & =  \max_{x \in X} [\ip{\nabla \psi (x_0)}{x} - \psi(x)] - \la \nabla \psi (x_0), u \ra . \label{thm_eq3}
    \end{align}
    Since the function $\psi(\cdot)$ is strongly convex (Assumption \ref{asum_phi}), a unique maximizer exists for the optimization problem on the right hand side of \eqref{thm_eq3}, which is denoted by $\hat x$. The optimality conditions for $\hat x \in X$ yield
    \begin{align*}
        \la \nabla \psi(x_0) - \nabla \psi(\hat{ x}) , x - \hat x \ra \leq 0 \qquad\forall x\in X.
    \end{align*}
    Note that $x_0 \in X$, i.e., it is feasible based on the updates of Algorithm~\ref{algo_Popov} and, hence, we see that the solution is $\hat x = x_0$,  as the preceding inequality is satisfied. 
    Therefore, by using this  in the right hand side of \eqref{thm_eq3}, we obtain
    \begin{align}
        H_u(x_0) = \la \nabla \psi (x_0), x_0 - u \ra - \psi(x_0) . \label{eq_hu_x1}
    \end{align}
    In the same way, we can derive the relation for the second term on the right hand side of \eqref{thm_eq2}, and obtain
    \begin{align}
        H_u(x_{T+1}) = \la \nabla \psi (x_{T+1}), x_{T+1} - u \ra - \psi(x_{T+1}) . \nonumber
    \end{align}
    Using the $\alpha$-strong convexity of $\psi(\cdot)$ (Assumption \ref{asum_phi}) in the preceding relation, we can further obtain
    \begin{align}
        H_u(x_{T+1}) \geq - \psi(u) + \frac{\alpha}{2} \|u - x_{T+1} \|^2 . \label{eq_hu_xT}
    \end{align}
    Substituting the relations obtained in \eqref{eq_lower_bd}, \eqref{eq_hu_x1}, and \eqref{eq_hu_xT} back in \eqref{thm_eq2}, we get
    \begin{align}
        &\gamma (T+1) \la F(u) , y^{(T+1)} - u \ra \leq \psi(u) - \psi(x_0) \nonumber\\
        & - \la \nabla \psi (x_0), u - x_0 \ra - \frac{\alpha}{2} \|u - x_{T+1} \|^2 + \sum_{t=0}^T \epsilon_t . \label{thm_eq4}
    \end{align}
    Now the quantity $\psi(u) - \psi(x_0) - \la \nabla \psi (x_0), u - x_0 \ra$ on the right hand side of \eqref{thm_eq4} is denoted by $B_{\psi}(u,x_0)$ which represents the Bregman divergence between the point $u \in X$ and $x_0 \in X$ induced by the function $\psi$. In addition, the last term on the right hand side of \eqref{thm_eq4} can be upper bounded using  Lemma~\ref{lem_eps_bound}. With these steps, \eqref{thm_eq4} reduces to
    \begin{align*}
        &\gamma (T+1) \la F(u) , y^{(T+1)} - u \ra + \frac{\alpha}{2} \|u - x_{T+1} \|^2 \nonumber\\
        &\leq B_{\psi}(u,x_0) + \frac{2\gamma^2 L^2}{\alpha} \|y_0 - x_0\|^2 . \nonumber
    \end{align*}
    In the preceding relation, we can ignore the second term on the left hand side and take the maximum with respect to $u \in X$ on both sides of the relation. Additionally, using the dual gap function (see~\eqref{dg_func}), we obtain
    \begin{align*}
        \gamma (T+1) G(y^{(T+1)}) \leq \max_{u\in X}B_{\psi}(u,x_0) + \frac{2\gamma^2 L^2}{\alpha} \|y_0 - x_0\|^2.
    \end{align*}
    Dividing by $\gamma (T+1)$ both sides of the preceding relation, we obtain
    \begin{align}
        G(y^{(T+1)}) \leq \frac{\max_{u \in X} B_{\psi}(u,x_0)}{(T+1) \gamma} + \frac{2\gamma L^2}{(T+1) \alpha} \|y_0 - x_0\|^2 . \nonumber
    \end{align}
    The result follows by using $\gamma = \frac{\alpha}{2L}$. Moreover, since the set $X$ is compact by Assumption \ref{asum_compact}, the quantity $\max_{u \in X} B_\psi(u,x_0)$ is finite.
\end{proof}

Theorem \ref{thm_popov_rates} shows $O(1/T)$ convergence rate of the dual gap function for the Popov mirror-prox which is the same as that of Korpelevich mirror-prox studied in \cite{nemirovski2004prox}, but with an additional term of $\frac{L}{T+1} \|y_0-x_0\|^2$ coming from the upper bound on $\sum_{t=0}^T \epsilon_t$ in Lemma \ref{lem_eps_bound}. We can eliminate the additional term by initializing Algorithm~\ref{algo_Popov} with $x_0 =y_0\in X$, hence giving a rate result similar to that of the Korpelevich method but with only a half of the mapping evaluations.

\section{Simulations}\label{sec_simulation}
In this section, we present simulation results for Popov and Korpelevich variants of the mirror-prox algorithm on a two player matrix game given by
\begin{align}
    \text{Player 1:} \quad &\min_{x_1\in \Delta_{2}} \;\; 
    \la x_1,A x_1\ra + \la x_1, B x_2\ra + \la p,x_1\ra \nonumber\\
    \text{Player 2:} \quad & \min_{x_2 \in \Delta_{2}} \;\; 
    \la x_2, C x_2\ra + \la x_1, D x_2\ra  + \la q, x_2\ra, \label{matrix_game}
\end{align}
%
where $\Delta_{2}$ is the probability simplex in $\R^{2}$, i.e., $\Delta_{2}=\{(h_1,h_2)\mid h \succcurlyeq 0, \ h_1+h_2 =1\}$. 
Hence, Assumption~\ref{asum_compact} is satisfied. 
The game can be modeled as a VI problem with 
the mapping $F(\cdot)$ obtained by differentiating each agent's loss function with respect to their respective decision variables, i.e.,
\begin{align}
  F(x) = \begin{bmatrix}
        A+A^T & B \\
        D^T & C+C^T
    \end{bmatrix} x + \begin{bmatrix}
        p \\
        q
    \end{bmatrix} , \label{mapping_F}
\end{align}
where $x = [x_1,x_2]^T \in \R^4$. Thus, the two player matrix game problem is equivalent to VI$(X,F)$ with $X=\Delta_2\times\Delta_2$ and the mapping $F(\cdot)$ as in~\eqref{mapping_F}.
The matrices $A$, $B$, $C$, and $D$ are generated to make the Jacobian of $F(\cdot)$, denoted by $\nabla F$, positive semidefinite with eigenvalues in the interval $[0,100]$. Hence, both Assumptions \ref{asum_monotone} and \ref{asum_lipschitz} are satisfied with $L=100$. 
The vectors $p$ and $s$ are sampled from a random normal distribution. For more details about the generation process see \cite[Section 7]{chakraborty2024random}.
We experiment with two different functions $\psi(\cdot)$: 
(i)~{\it Entropic case}: 
$\psi(x) = \sum_{j=1}^2 \sum_{i=1}^2 x_j^{(i)}(\ln x_j^{(i)}-1)$, which implies $\nabla \psi(x) = [\ln x_1^{(1)},\ln x_1^{(2)},\ln x_2^{(1)},\ln x_2^{(2)}]^T$, where $x_j = [x_j^{(1)}, x_j^{(2)}] \in \R^2$ for $j=1,2$. 
(ii)~{\it Euclidean case}: $\psi(x) = \frac{1}{2} \|x\|^2$, which implies $\nabla \psi(x) = x$. 
For the entropic case, $\nabla^2 \psi(x) \succcurlyeq \frac{I}{\max_{j,i} x_j^{(i)}} \succcurlyeq I$, since $\max_{j,i} x_j^{(i)} \leq 1$ due to the probability simplex constraints,
where $I$ is the identity matrix. For the Euclidean case, $\nabla^2 \psi(x) = I$. Thus, in both cases, Assumption~\ref{asum_phi} is satisfied with $\alpha = 1$. We run Algorithm~\ref{algo_Popov} using the step size $\gamma = \frac{\alpha}{2L} = \frac{1}{2L}$ and compare it with the Korpelevich mirror-prox algorithm~\cite{nemirovski2004prox} with the prescribed step size $\gamma = \frac{\alpha}{\sqrt{2}L}$. 
For the entropic case, the inverse transformation $\nabla \psi(\xi)^{-1} = \exp(\xi)$ preserves the nonnegativity of the variables. Therefore, the iterates $y_{t+1}$ and $x_{t+1}$ at step 3 of 
Algorithm~\ref{algo_Popov} are given by
\begin{align}
   &[y_{t+1}]_i = [z_{y,t+1}]_i [x_t]_i\exp(- \gamma [F(y_t))]_i), \quad i=1,2,3,4, \nonumber\\
   &[x_{t+1}]_i = [z_{x,t+1}]_i [x_t]_i\exp(- \gamma [F(y_{t+1})]_i), \quad i=1,2,3,4,\nonumber
\end{align}
where 
$[u]_i$ denotes the $i$th coordinate entry of a vector $u$, while $z_{y,t+1}$ and $z_{x,t+1}$ are normalization factors, i.e., 
for $i=1,2$ (Player~1),
\[[z_{y,t+1}]_i = \frac{1}{[x_t]_1\exp(- \gamma [F(y_t))]_1+[x_t]_2\exp(- \gamma [F(y_t))]_2},\]
and for $i=3,4$ (Player~2),
\[[z_{y,t+1}]_i = \frac{1}{[x_t]_3\exp(- \gamma [F(y_t))]_3+[x_t]_4\exp(- \gamma [F(y_t))]_4}.\]
The definition of $z_{x,t+1}$ is analogous (see~\cite{bubeck2015convex,chakraborty2022sparse}).

For the Euclidean case, the iterates $y_{t+1}$ and $x_{t+1}$ are: 
\begin{align*}
   &y_{t+1}=\Pi_{X}[x_t-\g F(y_t)], \cr
   &x_{t+1} = \Pi_{X} [x_t- \gamma F(y_{t+1})],
\end{align*}
where $\Pi_X[z]$ denotes the projection of a point $z$ on the set $X = \Delta_2 \times \Delta_2$ in the standard Euclidean norm. The projection for the Euclidean norm on the simplex is costly, as the nonnegativity of the iterates is not preserved directly from the updates, and we use the bisection method for simplex projection~\cite[Algorithm 3]{blondel2014large}. 

We initialize Algorithm~\ref{algo_Popov} with $x_0=y_0=[0.5,0.5]^2$. 
The dual gap function cannot be evaluated exactly and we estimate it empirically by generating $200000$ random samples within the simplex $\Delta_{2} \times \Delta_{2}$ and determining the maximum value of the inner product in \eqref{dg_func}. Note that this empirical approximation can be erroneous and introduce a variance between the empirical and the actual dual gap functions depending on the number of the samples generated. To lower the variance, we considered the matrix game of dimension~2.

Fig.~\ref{fig_mat_game} shows two different experiments with randomly generated Jacobian $\nabla F(\cdot)$. The left plot of Fig.~\ref{fig_mat_game} shows a superior performance for the entropic $\psi$ to that of the Euclidean $\psi$, whereas in the right plot of Fig.~\ref{fig_mat_game}, it is vice versa for both Korpelevich and Popov mirror-prox methods. Additionally, in both plots, the performance of Korpelevich and Popov mirror-prox methods are similar since we have chosen the same $x_0$ and $y_0$ for the later in order to eliminate the error term $\frac{L}{T}\|y_0-x_0\|^2$ showing up in the analysis. Moreover, Popov mirror-prox is computationally less expensive as compared to the Korpelevich mirror-prox.


\begin{figure}[t]\centering
	\begin{subfigure}{.49\linewidth}
		\includegraphics[width=1\linewidth, height = 0.75\linewidth]
		{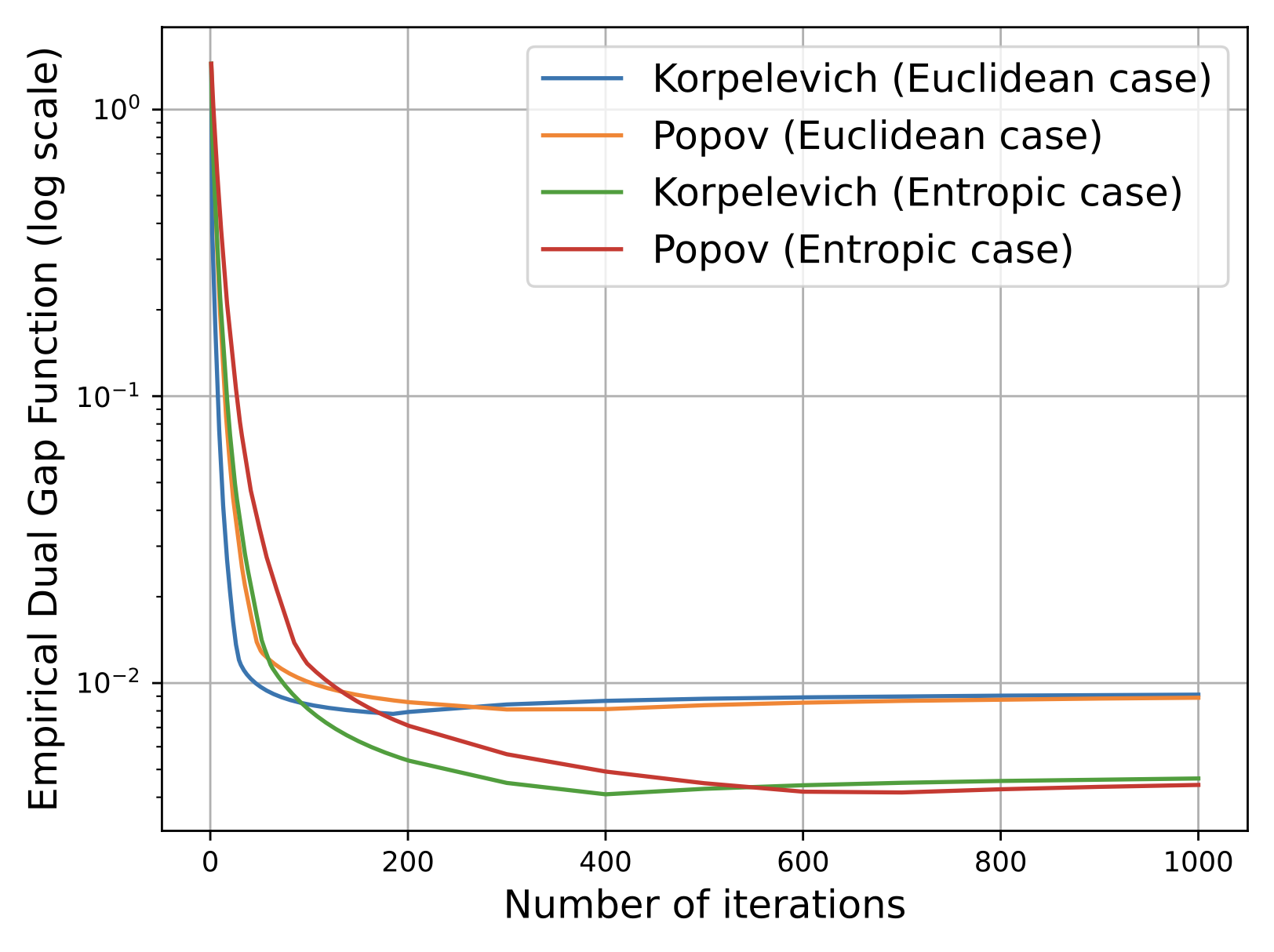}
	\end{subfigure}
	\begin{subfigure}{.49\linewidth}
		\includegraphics[width=1\linewidth,height = 0.75\linewidth]
		{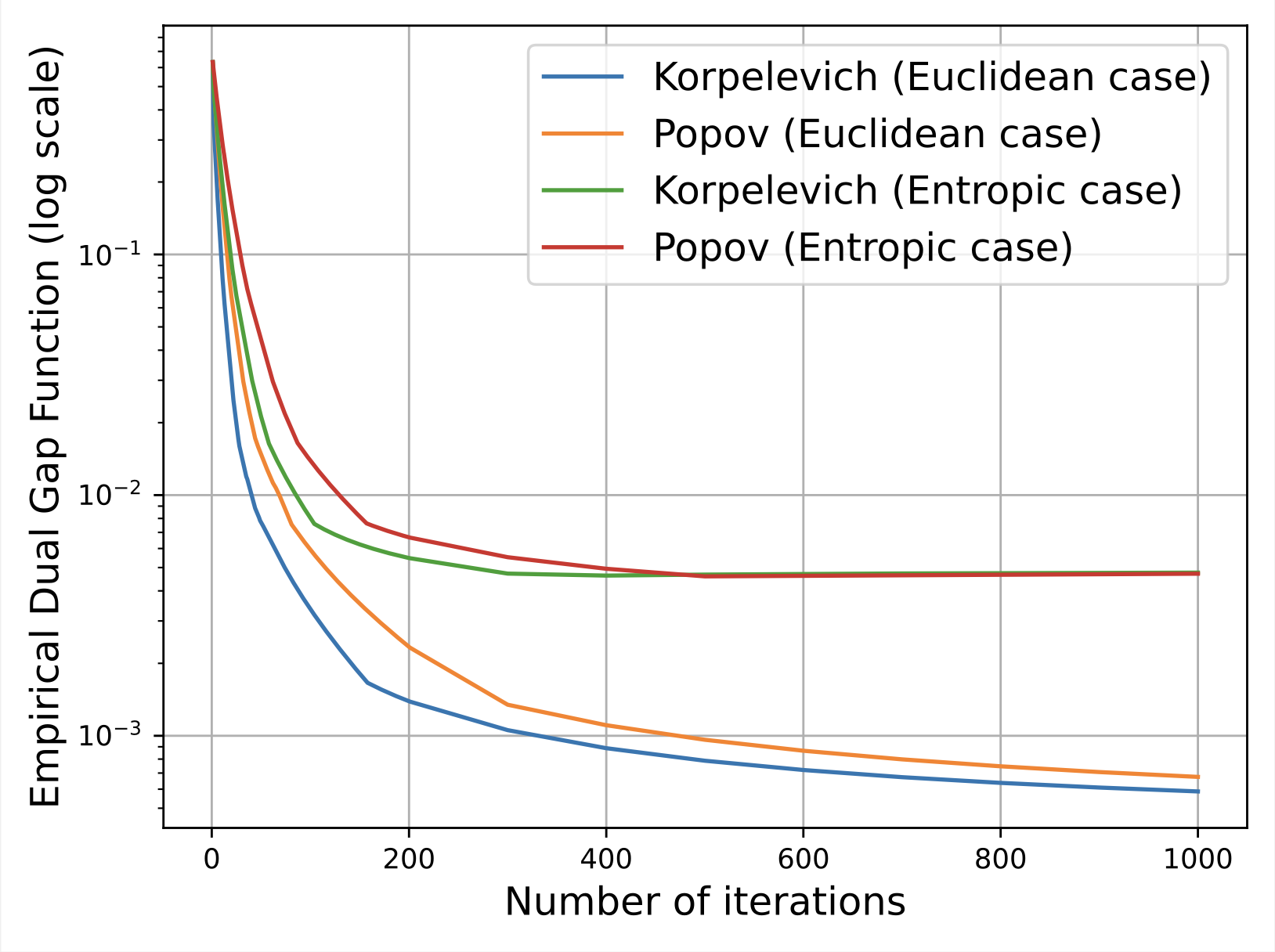}
	\end{subfigure}
 \caption{Two random experimental results for the matrix game}
\label{fig_mat_game}
\end{figure}



\section{Conclusions}\label{sec_conclusion}
In this paper, we discussed the conceptual mirror-prox algorithm and its implementation difficulties. Then, we developed an efficient implementation of the mirror-prox algorithm by using Popov method. We established the convergence rate of $O(1/T)$ for this method and, also, simulated its performance for a matrix game to support our results. As a future direction, one can extend the Popov method to stochastic mirror-prox. Also,  one may design randomized projection algorithms 
to alleviate the projection on the set $X$ when the set is complex and it is difficult to project on.






\bibliographystyle{IEEEtran}
\bibliography{IEEEabrv,reference}

\begin{thebibliography}{10}
\providecommand{\url}[1]{#1}
\csname url@rmstyle\endcsname
\providecommand{\newblock}{\relax}
\providecommand{\bibinfo}[2]{#2}
\providecommand\BIBentrySTDinterwordspacing{\spaceskip=0pt\relax}
\providecommand\BIBentryALTinterwordstretchfactor{4}
\providecommand\BIBentryALTinterwordspacing{\spaceskip=\fontdimen2\font plus
\BIBentryALTinterwordstretchfactor\fontdimen3\font minus \fontdimen4\font\relax}
\providecommand\BIBforeignlanguage[2]{{%
\expandafter\ifx\csname l@#1\endcsname\relax
\typeout{** WARNING: IEEEtran.bst: No hyphenation pattern has been}%
\typeout{** loaded for the language `#1'. Using the pattern for}%
\typeout{** the default language instead.}%
\else
\language=\csname l@#1\endcsname
\fi
#2}}

\bibitem{jofre2007variational}
A.~Jofr{\'e}, R.~T. Rockafellar, and R.~J. Wets, ``Variational inequalities and economic equilibrium,'' \emph{Mathematics of Operations Research}, vol.~32, no.~1, pp. 32--50, 2007.

\bibitem{hu2011variational}
M.~Hu and M.~Fukushima, ``Variational inequality formulation of a class of multi-leader-follower games,'' \emph{Journal of optimization theory and applications}, vol. 151, pp. 455--473, 2011.

\bibitem{lanctot2017unified}
M.~Lanctot, V.~Zambaldi, A.~Gruslys, A.~Lazaridou, K.~Tuyls, J.~P{\'e}rolat, D.~Silver, and T.~Graepel, ``A unified game-theoretic approach to multiagent reinforcement learning,'' \emph{Advances in neural information processing systems}, vol.~30, 2017.

\bibitem{facchinei2003finite}
F.~Facchinei and J.-S. Pang, \emph{Finite-dimensional variational inequalities and complementarity problems}.\hskip 1em plus 0.5em minus 0.4em\relax Springer, 2003.

\bibitem{korpelevich1976extragradient}
G.~M. Korpelevich, ``The extragradient method for finding saddle points and other problems,'' \emph{Matecon}, vol.~12, pp. 747--756, 1976.

\bibitem{popov1980modification}
L.~D. Popov, ``A modification of the arrow-hurwicz method for search of saddle points,'' \emph{Mathematical notes of the Academy of Sciences of the USSR}, vol.~28, pp. 845--848, 1980.

\bibitem{tseng1995linear}
P.~Tseng, ``On linear convergence of iterative methods for the variational inequality problem,'' \emph{Journal of Computational and Applied Mathematics}, vol.~60, no. 1-2, pp. 237--252, 1995.

\bibitem{malitsky2015projected}
Y.~Malitsky, ``Projected reflected gradient methods for monotone variational inequalities,'' \emph{SIAM Journal on Optimization}, vol.~25, no.~1, pp. 502--520, 2015.

\bibitem{mokhtari2020unified}
A.~Mokhtari, A.~Ozdaglar, and S.~Pattathil, ``A unified analysis of extra-gradient and optimistic gradient methods for saddle point problems: Proximal point approach,'' in \emph{International Conference on Artificial Intelligence and Statistics}.\hskip 1em plus 0.5em minus 0.4em\relax PMLR, 2020, pp. 1497--1507.

\bibitem{nemirovskij1983problem}
A.~S. Nemirovskij and D.~B. Yudin, ``Problem complexity and method efficiency in optimization,'' 1983.

\bibitem{nesterov2005smooth}
Y.~Nesterov, ``Smooth minimization of non-smooth functions,'' \emph{Mathematical programming}, vol. 103, pp. 127--152, 2005.

\bibitem{nemirovski2004prox}
A.~Nemirovski, ``Prox-method with rate of convergence o (1/t) for variational inequalities with lipschitz continuous monotone operators and smooth convex-concave saddle point problems,'' \emph{SIAM Journal on Optimization}, vol.~15, no.~1, pp. 229--251, 2004.

\bibitem{bubeck2015convex}
S.~Bubeck, ``Convex optimization: Algorithms and complexity,'' \emph{Foundations and Trends{\textregistered} in Machine Learning}, vol.~8, no. 3-4, pp. 231--357, 2015.

\bibitem{juditsky2011solving}
A.~Juditsky, A.~Nemirovski, and C.~Tauvel, ``Solving variational inequalities with stochastic mirror-prox algorithm,'' \emph{Stochastic Systems}, vol.~1, no.~1, pp. 17--58, 2011.

\bibitem{semenov2017version}
V.~Semenov, ``A version of the mirror descent method to solve variational inequalities,'' \emph{Cybernetics and Systems Analysis}, vol.~53, no.~2, pp. 234--243, 2017.

\bibitem{jiang2022generalized}
R.~Jiang and A.~Mokhtari, ``Generalized optimistic methods for convex-concave saddle point problems,'' \emph{arXiv preprint arXiv:2202.09674}, 2022.

\bibitem{azizian2023rate}
W.~Azizian, F.~Iutzeler, J.~Malick, and P.~Mertikopoulos, ``The rate of convergence of bregman proximal methods: Local geometry vs. regularity vs. sharpness,'' 2023.

\bibitem{yang2019learning}
Y.~Yang, H.~Wang, N.~Kiyavash, and N.~He, ``Learning positive functions with pseudo mirror descent,'' \emph{Advances in Neural Information Processing Systems}, vol.~32, 2019.

\bibitem{chakraborty2022sparse}
A.~Chakraborty, K.~Rajawat, and A.~Koppel, ``Sparse representations of positive functions via first-and second-order pseudo-mirror descent,'' \emph{IEEE Transactions on Signal Processing}, vol.~70, pp. 3148--3164, 2022.

\bibitem{zhong2024heterogeneous}
Y.~Zhong, J.~G. Kuba, X.~Feng, S.~Hu, J.~Ji, and Y.~Yang, ``Heterogeneous-agent reinforcement learning,'' \emph{Journal of Machine Learning Research}, vol.~25, 2024.

\bibitem{lan2023policy}
G.~Lan, ``Policy mirror descent for reinforcement learning: Linear convergence, new sampling complexity, and generalized problem classes,'' \emph{Mathematical programming}, vol. 198, no.~1, pp. 1059--1106, 2023.

\bibitem{blondel2014large}
M.~Blondel, A.~Fujino, and N.~Ueda, ``Large-scale multiclass support vector machine training via euclidean projection onto the simplex,'' in \emph{2014 22nd International Conference on Pattern Recognition}.\hskip 1em plus 0.5em minus 0.4em\relax IEEE, 2014, pp. 1289--1294.

\bibitem{anagnostides2023optimistic}
I.~Anagnostides, I.~Panageas, G.~Farina, and T.~Sandholm, ``Optimistic policy gradient in multi-player markov games with a single controller: Convergence beyond the minty property,'' \emph{arXiv preprint arXiv:2312.12067}, 2023.

\bibitem{JuditskyNemirovsky2016}
A.~Juditsky and A.~Nemirovski, ``Solving variational inequalities with monotone operators on domains given by linear minimization oracles,'' \emph{Mathematical Programming}, vol. 156, pp. 221--256, 2016.

\bibitem{zhang2003dual}
J.~Zhang, C.~Wan, and N.~Xiu, ``The dual gap function for variational inequalities,'' \emph{Applied Mathematics and Optimization}, vol.~48, pp. 129--148, 2003.

\bibitem{chakraborty2024random}
A.~Chakraborty and A.~Nedi{\'c}, ``Randomized feasibility-update algorithms for variational inequality problems,'' \emph{arXiv preprint arXiv:2402.05462}, 2024.

\end{thebibliography}

\end{document}